\newtheorem{theorem}{Theorem}[section]
\newtheorem{lemma}[theorem]{Lemma}
\newtheorem{corollary}[theorem]{Corollary}
\newtheorem{definition}[theorem]{Definition}
\newtheorem{remark}[theorem]{Remark}
\title{}
\begin{document}
\title{A note on the Tur\'an number for the traces of hypergraphs}

\author{Bingchen Qian$^{\text{a,b}}$ and Gennian Ge$^{\text{a,}}$\thanks{
  Corresponding author. Email address: gnge@zju.edu.cn. Research supported by National Key Research and Development Program of China under Grant Nos. 2020YFA0712100 and
2018YFA0704703, the National Natural Science Foundation of China under Grant No. 11971325, and Beijing Scholars Program.}\\
\footnotesize $^{\text{a}}$ School of Mathematical Sciences, Capital Normal University, Beijing, 100048, China\\
\footnotesize $^{\text{b}}$ School of Mathematical Sciences, Zhejiang University, Hangzhou 310027, Zhejiang, China}

\maketitle

\begin{abstract}
Let $\mathcal{H}$ be an $r$-uniform hypergraph and $F$ be a graph. We say $\mathcal{H}$ contains $F$ as a trace if there exists some set $S\subseteq V(\mathcal{H})$ such that $\mathcal{H}|_{S}:=\{E\cap S: E\in E(\mathcal{H})\}$ contains a subgraph isomorphic to $F.$ Let $ex_r(n,Tr(F))$ denote the maximum number of edges of an $n$-vertex $r$-uniform hypergraph $\mathcal{H}$ which does not contain $F$ as a trace. In this paper, we improve the lower bounds of $ex_r(n,Tr(F))$ when $F$ is a star, and give some optimal cases. We also improve the upper bound for the case when $\mathcal{H}$ is $3$-uniform and $F$ is $K_{2,t}$ when $t$ is small.

\medskip
\noindent{\it Keywords: extremal hypergraph theory, traces, covering.}

\smallskip

\noindent {{\it AMS subject classifications\/}:  05C35.}

\end{abstract}

\section{Introduction}
A hypergraph $\mathcal{H}$ is $r$-uniform if it is a family of $r$-element subsets of a finite vertex set.  We usually denote its vertex set and edge set by $V(\mathcal{H})$ and $E(\mathcal{H})$, respectively.

\begin{definition}
   For a graph $F$ with vertex set $\{v_1,\ldots, v_p\}$ and edge set $\{e_1,\ldots , e_q\},$ a hypergraph
$\mathcal{H}$ contains $F$ as a trace if there exists a set of distinct vertices $W:=\{w_1, \ldots, w_p\} \subseteq V(\mathcal{H})$ and distinct edges $\{f_1,\ldots , f_q\} \subseteq E(\mathcal{H}),$ such that if $e_i = v_\alpha v_\beta,$ then $\{w_\alpha, w_\beta\} = f_i\cap W.$ The vertices $\{w_1, \ldots, w_p\}$ are called the base vertices of $F.$
\end{definition}

For $r\ge2,$ let $ex_r(n,Tr(F))$ be the maximum number of edges of an $n$-vertex $r$-uniform hypergraph that does not contain $F$ as a trace.

In \cite{MR2365419}, Mubayi and Zhao determined the asymptotic value of $ex_r(n, Tr(K_s))$ for all $r$ when $s\in\{3,4\}$ and conjectured that for $s\ge 5,$ $ex_r(n,Tr(K_s))\sim \left(\frac{n}{s-1}\right)^{s-1}.$ Later, Sali and Spiro \cite{MR3690260} determined the order of magnitude of $ex_r(n, Tr(K_{s,t}))$ when $t\ge (s-1)!+1,$ $s\ge 2r-1.$  Recently, F\"uredi and Luo \cite{2020arXiv200207350F} deduced the order of magnitude of $ex_r(n,Tr(F))$ for all graphs $F$ in terms of their generalized Tur\'an numbers. In particular, they showed
$$ex_r(n,Tr(F)) = \Theta\left(\max_{2\le s\le r}ex(n, K_s, F)\right),$$
where $ex(n,K_s,F)$ denotes the maximum number of copies of $K_s$ in an $F$-free graph on $n$ vertices. In the case where $F$ is outerplanar, they gave the following theorem.
\begin{theorem}[\cite{2020arXiv200207350F}]
  If $F$ is an $m$-vertex outerplanar graph, then
  $$ex(n-r+2,F)\le ex_r(n,Tr(F))\le \frac{1}{2}r^r(m-2)^{(r-2)}ex(n,F).$$
\end{theorem}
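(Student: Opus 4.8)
\emph{The two bounds are logically independent, so I would treat them in turn.}

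\textbf{Lower bound.} The plan is to manufacture a dense $Tr(F)$-free hypergraph from an extremal $F$-free graph. Let $G_0$ be an $F$-free graph on $n-r+2$ vertices with $ex(n-r+2,F)$ edges, fix a set $U$ of $r-2$ fresh vertices disjoint from $V(G_0)$, and let $\mathcal H$ be the $r$-uniform hypergraph on the $n$ vertices $V(G_0)\cup U$ with edge set $\{e\cup U: e\in E(G_0)\}$. Then $|E(\mathcal H)|=ex(n-r+2,F)$, so it remains only to check that $\mathcal H$ contains no trace of $F$. Suppose it did, with base vertices $W=\{w_1,\dots,w_m\}$ and distinct hyperedges $f_1,\dots,f_q$, where $f_i=e_i\cup U$ and $e_i\in E(G_0)$. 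For each edge $v_\alpha v_\beta$ of $F$ the set $f_i\cap W=(e_i\cap W)\cup(U\cap W)$ must have exactly two elements. Writing $k=|U\cap W|$, the case $k\ge 3$ is immediately impossible, and $k=2$ forces every edge of $F$ to be realized on the same pair $U\cap W$, which cannot happen once $F$ has two vertices of positive degree. If $k=0$ then each $f_i\cap W=e_i\cap W=\{w_\alpha,w_\beta\}$ is a genuine edge of $G_0$, so $w_1,\dots,w_m$ span a copy of $F$ in $G_0$, a contradiction. The remaining case $k=1$ forces the unique vertex of $U\cap W$ to be a common endpoint of all edges of $F$, i.e.\ $F$ is a star; hence this construction already gives the bound for every non-star $F$, and the (elementary) star case would be handled by a separate construction.

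\textbf{Upper bound: the reduction.} Let $\mathcal H$ be $r$-uniform on $[n]$ with no trace of $F$; I would aim to produce an $F$-free graph $G$ on $[n]$ together with a map $\rho\colon E(\mathcal H)\to E(G)$ satisfying $\rho(E)\subseteq E$ for every hyperedge and having every fibre of size at most $C:=\tfrac12 r^r(m-2)^{r-2}$. Granting this, $|E(\mathcal H)|\le C\,|E(G)|\le C\cdot ex(n,F)$, which is exactly the claimed inequality. Thus the entire task becomes: assign to each hyperedge one of its $\binom r2$ internal pairs so that the resulting simple image graph stays $F$-free while no pair is overused.

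\textbf{Upper bound: building $\rho$ and the role of outerplanarity.} I would construct $\rho$ greedily, inserting the hyperedges one at a time and, for each, choosing an internal pair that (i) does not push its fibre past $C$ and (ii) does not create a copy of $F$ in the current image graph $G$. The point at which this can fail is exactly where outerplanarity enters: since $F$ is outerplanar it is $2$-degenerate, so order $v_1,\dots,v_m$ with each $v_j$ having at most two neighbours among $v_1,\dots,v_{j-1}$. A copy of $F$ in $G$ can then be \emph{lifted} to a trace by processing its $m$ base vertices in this order and choosing, for each of the $q\le 2m-3$ edges, a hyperedge whose exact intersection with the base set $W$ is the required pair; the $r-2$ coordinates of a hyperedge lying outside the chosen pair are free, and the only obstruction is that such a free coordinate might accidentally land on one of the $m-2$ remaining base vertices. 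Counting the hyperedges through a fixed pair that are spoiled in this way, over the $r-2$ free coordinates and the $m-2$ forbidden targets, together with the $\binom r2$ choices of representative pair and the orderings needed to keep the $q$ selected hyperedges distinct, is what should produce the factor $C=\tfrac12 r^r(m-2)^{r-2}$. The main obstacle is precisely this lifting lemma: one must show that whenever the greedy assignment gets stuck---so that every internal pair of the new hyperedge either overloads a fibre or completes a copy of $F$ in $G$---the slack in the $r-2$ free coordinates is always enough to convert that copy of $F$ into an honest trace of $F$ in $\mathcal H$, contradicting the hypothesis. Arranging the bookkeeping so that this step yields the \emph{stated} constant, rather than merely some constant depending on $r$ and $m$, is the delicate part.
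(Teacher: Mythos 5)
First, a caveat: the paper you are working from does not prove this statement at all --- it is quoted from F\"uredi and Luo \cite{2020arXiv200207350F} as background --- so there is no in-paper proof to compare against, and your proposal has to be judged on its own merits.

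Your lower bound is a correct and complete argument for every $F$ that is not a star: the construction $\{e\cup U: e\in E(G_0)\}$ with a common $(r-2)$-set $U$ is the natural one, and your case analysis on $k=|U\cap W|$ is sound. The genuine gap is the parenthetical dismissal of stars. That case is not elementary, and for $F=K_{1,2}$ it cannot be repaired: if two distinct edges $f_1,f_2$ of an $r$-uniform hypergraph intersect, then choosing $w_1\in f_1\cap f_2$, $w_2\in f_1\setminus f_2$, $w_3\in f_2\setminus f_1$ exhibits a $K_{1,2}$ trace, so every $Tr(K_{1,2})$-free hypergraph is a matching with at most $n/r$ edges, whereas $ex(n-r+2,K_{1,2})=\lfloor (n-r+2)/2\rfloor$ is larger for $r\ge 3$ and large $n$. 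Thus the statement as quoted is actually false for this outerplanar $F$ (the original formulation must carry hypotheses lost in transcription), and for stars $K_{1,t}$ with $t\ge 3$ the inequality does hold but only via clique- and covering-type constructions of the kind in Section 2 of this paper, not via yours. Your own case analysis pinpoints exactly this failure mode; it should have been raised as an obstruction to the unqualified statement rather than waved off.

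The upper bound is a plan, not a proof, and it has a structural flaw beyond the bookkeeping you concede. In your greedy scheme an edge of the image graph $G$ may have a fibre of size $1$; a copy of $F$ in $G$ therefore supplies only one hyperedge per edge of $F$, and that single hyperedge may meet the base set $W$ in more than the required pair, with nothing available to swap in. So the stuck condition ``every internal pair of the new hyperedge completes a copy of $F$ in $G$'' gives you no purchase on building a trace: for any lifting argument the auxiliary graph must consist of pairs of large co-degree (or pairs whose links contain large structured families), not of pairs that merely happen to lie in the image of $\rho$. Even after that repair, the number of spoiled hyperedges through a pair $\{w_\alpha,w_\beta\}$ --- those meeting $W$ in a third base vertex --- is governed by triple co-degrees, which no pair co-degree threshold controls: it may be that every hyperedge through $\{w_\alpha,w_\beta\}$ contains the same third base vertex. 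Overcoming exactly this is the real content of the theorem; it is where the factors $r^{r-2}(m-2)^{r-2}$ and the hypothesis on $F$ must be used. Note also that your outline invokes only $2$-degeneracy of $F$, which is strictly weaker than outerplanarity (e.g.\ $K_{2,3}$ is $2$-degenerate but not outerplanar), so outerplanarity is doing work that your sketch never touches. As written, the upper bound would not survive an attempt to fill in the details.
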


In \cite{2020arXiv200207350F}, F\"uredi and Luo also investigated the case when $F$ is a star and gave both lower and upper bounds. Later, Luo and Spiro \cite{MR4245297} gave an upper bound on the number of edges in a $3$-uniform hypergraph which is $Tr(K_{2,t})$-free for $t\ge 14.$

In this note, we first improve the lower bounds for $ex_r(n,Tr(K_{1,t}))$ and show that our lower bounds are optimal for infinitely many cases. Secondly, by adapting the method used in \cite{MR4245297}, we improve the upper bound for $ex_3(n,Tr(K_{2,t}))$ when $t$ is small.

The paper is organised as follows. In Sections $2$ and $3,$ we study the lower bounds for $ex_r(n, K_{1,t})$ and the upper bounds for $ex_3(n,Tr(K_{2,t})),$ respectively. We conclude in Section $4$.

\section{Improved lower bounds for $ex_r(n,Tr(K_{1,t}))$}
We first give the definition of the well-known structure called disjointly representable family.
\begin{definition}
  The sets $E_1, E_2, \ldots, E_k\subseteq X$ are said to be disjointly representable if there exist $x_1,x_2,\ldots,x_k\in X$ such that
  \begin{equation*}
    x_i\in E_i \Leftrightarrow i=j~ (1\le i,j \le k),
  \end{equation*}
  in other words, no $E_i$ is contained in the union of the others.
\end{definition}

Let $f(r,k)=\max|\mathcal{H}|,$ where the maximum is taken over all $r$-uniform set-systems $\mathcal{H}$ containing no $k$ disjointly representable members. In the literature, the best upper and lower bounds for $f(r,k)$ were given by Frankl and Pach \cite{MR739411}.

\begin{theorem}[\cite{MR739411}]\label{thm of tight upper bound}
  We have
  \begin{equation*}
    f(r,k)\le {r+k-1 \choose k-1}, \text{where } r\ge1, k\ge2,
  \end{equation*}
  \begin{equation*}
    f(r,3)=\left\lfloor\frac{r+2}{2}\right\rfloor\left\lceil\frac{r+2}{2}\right\rceil, \text{where } r\ge1,
  \end{equation*}
  and (up to isomorphism) the only $r$-uniform set-system $\mathcal{H}_{r,3}$ without $3$ disjointly representable members, satisfying $\mathcal{H}_{r,3}=f(r,3),$ can be constructed as follows. Let $A$ and $B$ be disjoint set with $|A|=\lfloor\frac{r+2}{2}\rfloor$ and $|B|=\lceil\frac{r+2}{2}\rceil,$ and let $\mathcal{H}_{r,3}:=\{E\subseteq A\cup B \mid |E|=r \text{ and } |A\backslash E|=|B\backslash E| =1\}.$
  \begin{equation*}
    f(2,k)={k+1\choose 2}-\left\lceil\frac{k+1}{2}\right\rceil, \text{ where }  k\ge2,
  \end{equation*}
  and the only graph $\mathcal{H}_{2,k}$ with $|\mathcal{H}_{2,k}|=f(2,k),$ which contains no $k$ disjointly representable edges, can be obtained from the complete graph $K_{k+1}$ by deleting $\lceil\frac{k+1}{2}\rceil$ edges as disjoint as possible.
\end{theorem}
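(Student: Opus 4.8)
The plan is to handle the general (non-tight) upper bound separately from the two exact cases, since the first follows from a clean set-pair argument while the exact values and the uniqueness statements require a more delicate structural analysis.

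For the bound $f(r,k)\le\binom{r+k-1}{k-1}$ I would reduce to the skew version of the Bollob\'as set-pair inequality. First record the convenient reformulation built into the definition: sets $E_1,\dots,E_k$ are disjointly representable exactly when each $E_i$ carries a private element $x_i\in E_i\setminus\bigcup_{j\ne i}E_j$; equivalently, with $T=\{x_1,\dots,x_k\}$, the trace $\mathcal H|_T$ realises all $k$ singletons of $T$. Now list $\mathcal H=\{E_1,\dots,E_m\}$ in an arbitrary order, put $A_i:=E_i$, and for each $i$ choose $B_i\subseteq X\setminus E_i$ to be a minimal transversal of the earlier members, i.e. a minimal set meeting every $E_j$ with $j<i$ while avoiding $E_i$ (this exists because $E_j\setminus E_i\ne\emptyset$ for distinct $r$-sets). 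Then $A_i\cap B_i=\emptyset$ and $A_j\cap B_i\ne\emptyset$ for every $j<i$, which is precisely the skew set-pair condition; since $|A_i|=r$, the skew Bollob\'as inequality gives $m\le\binom{r+(k-1)}{r}=\binom{r+k-1}{k-1}$ as soon as we control $|B_i|\le k-1$.

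The only real point is this size bound, and it is exactly where the hypothesis is used. By minimality of $B_i$, each $b\in B_i$ admits a witness $E_{j_b}$ with $j_b<i$ and $E_{j_b}\cap B_i=\{b\}$, and distinct elements give distinct witnesses. For $b'\ne b$ we have $b\notin E_{j_{b'}}$ and $b\notin E_i$, so $b$ is a private element of $E_{j_b}$ inside $\{E_{j_b}:b\in B_i\}$; hence this family is disjointly representable, and if in addition $E_i$ has an element lying in no witness then $\{E_{j_b}\}\cup\{E_i\}$ is disjointly representable. In either case the forbidden configuration forces $|B_i|\le k-1$, which completes the upper bound.

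For the exact values the skew-pair estimate is not tight (for $k=3$ it only gives $\binom{r+2}{2}>\lfloor\frac{r+2}{2}\rfloor\lceil\frac{r+2}{2}\rceil$), so I would argue directly. For $f(r,3)$ I would exploit the trace reformulation—no three disjointly representable members means every $3$-set $T$ loses at least one singleton from $\mathcal H|_T$—and use a shifting/compression argument to force an extremal $\mathcal H$ into the product shape $\{E:|A\setminus E|=|B\setminus E|=1\}$ on a ground set $A\cup B$ of size $r+2$, whose cardinality $|A|\cdot|B|$ is maximised at the balanced split. For $f(2,k)$ the members are edges, and $k$ edges are disjointly representable precisely when they admit a system of distinct private endpoints; a K\"onig/Hall-type analysis should confine all edges of an extremal graph to $k+1$ vertices and pin the missing edges down to a maximum ``as disjoint as possible'' set, yielding $\binom{k+1}{2}-\lceil\frac{k+1}{2}\rceil$. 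The hard part throughout is \emph{uniqueness}: one must show that equality in each structural step forces exactly the claimed families $\mathcal H_{r,3}$ and $\mathcal H_{2,k}$, and this characterization of the extremal configurations, rather than the numerical bound itself, is the main obstacle.
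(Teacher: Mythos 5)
The first thing to note is that the paper does not prove this statement at all: it is Frankl and Pach's theorem, imported verbatim with the citation \cite{MR739411}, so the only question is whether your argument stands on its own. Its first part does. The skew set-pair argument for $f(r,k)\le\binom{r+k-1}{k-1}$ is complete and correct: the transversal $B_i$ exists because $E_j\setminus E_i\neq\emptyset$ for distinct $r$-sets; minimality gives each $b\in B_i$ a witness $E_{j_b}$ with $E_{j_b}\cap B_i=\{b\}$; distinct $b$'s have distinct witnesses, and each witness keeps $b$ as a private element, so the witnesses form a disjointly representable subfamily of size $|B_i|$, forcing $|B_i|\le k-1$; the skew Bollob\'as inequality of Frankl and Kalai then gives the bound. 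This is essentially the classical route to that inequality.

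The genuine gap is everything else, and it is not a side issue. The exact values $f(r,3)=\lfloor\frac{r+2}{2}\rfloor\lceil\frac{r+2}{2}\rceil$ and $f(2,k)=\binom{k+1}{2}-\lceil\frac{k+1}{2}\rceil$, together with the uniqueness of $\mathcal{H}_{r,3}$ and $\mathcal{H}_{2,k}$, are only gestured at: ``shifting/compression'' and ``K\"onig/Hall-type analysis'' are named but not carried out, and your closing sentence concedes that the extremal characterization ``is the main obstacle.'' This matters twice over. First, those exact values are precisely the parts of the theorem the paper later uses: the upper bounds in its exact results for $ex_3(n,Tr(K_{1,t}))$ come from the exact value of $f(2,t)$, and the bound $ex_{2k}(n,Tr(K_{1,3}))\le \frac{n(k+1)}{2}$ comes from the exact value of $f(2k-1,3)$; the binomial bound you did prove is strictly weaker (you note this yourself for $k=3$) and would not yield either application. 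Second, the proposed tools are doubtful here: it is not clear that $S_{xy}$-compressions preserve the absence of $k$ disjointly representable members (disjoint representability is a condition on traces being singletons, and such shifts can create new traces), and even where compression arguments succeed they produce \emph{some} extremal family of compressed shape rather than a uniqueness statement --- note that $\mathcal{H}_{r,3}$, the family of complements of pairs meeting both $A$ and $B$, is not of a standard compressed form. To make the statement rigorous you would need to reproduce Frankl--Pach's structural analysis for the cases $k=3$ and $r=2$, or do what the paper does and simply cite \cite{MR739411}.
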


In \cite{2020arXiv200207350F}, F\"uredi and Luo gave a connection between $f(r,k)$ and the upper bound of $ex_r(n,Tr(K_{1,t})),$ and gave the following theorem.
\begin{theorem}[\cite{2020arXiv200207350F}]\label{upper and lower bounds for ex(n,Tr(K))}
  For any $r\ge2,t\ge2,$ if $n=a(r+t-2)+b$ with $b\le r+t-3$ then
  \begin{equation*}
    a{r+t-2\choose r}+{b\choose r}\le ex_r(n,Tr(K_{1,t}))\le \frac{n}{r}f(r-1,t)\le\frac{n}{r}{r+t-2\choose r-1}.
  \end{equation*}
  In particular, if $n$ is divisible by $r+t-2,$ the lower bound is $\frac{n}{r}{r+t-3\choose r-1}.$
\end{theorem}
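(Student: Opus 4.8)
The plan is to reduce the trace condition to a local condition on the links of vertices, after which the upper bound follows from Theorem~\ref{thm of tight upper bound} by double counting and the lower bound from an explicit block construction. The key observation is a dictionary between star-traces and disjointly representable families. For $v\in V(\mathcal{H})$ let $\mathcal{H}_v:=\{E\setminus\{v\}:v\in E\in E(\mathcal{H})\}$ be its link, an $(r-1)$-uniform family with $|\mathcal{H}_v|=\deg_{\mathcal{H}}(v)$. Labelling $K_{1,t}$ by its centre $v_0$ and leaves $v_1,\dots,v_t$, a trace of $K_{1,t}$ whose base vertex in the centre position is $w_0$ consists of $t$ distinct edges $f_1,\dots,f_t$ through $w_0$ and distinct leaves $w_1,\dots,w_t$ with $f_i\cap W=\{w_0,w_i\}$, where $W=\{w_0,\dots,w_t\}$. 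Setting $g_i:=f_i\setminus\{w_0\}\in\mathcal{H}_{w_0}$, the condition $f_i\cap W=\{w_0,w_i\}$ is exactly $g_i\cap\{w_1,\dots,w_t\}=\{w_i\}$, i.e.\ $w_j\in g_i\Leftrightarrow i=j$; so such a trace exists if and only if $g_1,\dots,g_t$ are $t$ disjointly representable members of $\mathcal{H}_{w_0}$ with representatives $w_1,\dots,w_t$. As $w_0$ ranges over all centres, $\mathcal{H}$ is $Tr(K_{1,t})$-free if and only if no link contains $t$ disjointly representable members.

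For the upper bound, the dictionary shows that every link of a $Tr(K_{1,t})$-free $\mathcal{H}$ is an $(r-1)$-uniform family without $t$ disjointly representable members, so $\deg_{\mathcal{H}}(v)=|\mathcal{H}_v|\le f(r-1,t)$. Counting incidences gives
\begin{equation*}
r\,|E(\mathcal{H})|=\sum_{v\in V(\mathcal{H})}\deg_{\mathcal{H}}(v)\le n\,f(r-1,t),
\end{equation*}
hence $|E(\mathcal{H})|\le\frac{n}{r}f(r-1,t)$, and $f(r-1,t)\le{r+t-2\choose r-1}$ from Theorem~\ref{thm of tight upper bound} yields the last inequality.

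For the lower bound I would partition the $n$ vertices into $a$ blocks of size $r+t-2$ and one block of size $b\le r+t-3$, and let $\mathcal{H}$ be all $r$-subsets contained in a single block, giving exactly $a{r+t-2\choose r}+{b\choose r}$ edges. Every edge through $v$ lies in the block of $v$, so $\mathcal{H}_v$ is the complete $(r-1)$-uniform family on at most $r+t-3$ points. Were it to contain $t$ disjointly representable members, then fixing the $t$ (necessarily distinct) representatives, each member would be an $(r-1)$-set containing its own representative and avoiding the other $t-1$, hence lying in a ground set of size at most $(r+t-3)-(t-1)=r-2<r-1$, a contradiction. Thus no link has $t$ disjointly representable members and $\mathcal{H}$ is $Tr(K_{1,t})$-free. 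When $(r+t-2)\mid n$ we have $b=0$ and $a=n/(r+t-2)$, and the identity $\frac{n}{r+t-2}{r+t-2\choose r}=\frac{n}{r}{r+t-3\choose r-1}$ gives the stated special case.

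The crux is the first step: recognizing that a star-trace centred at $w_0$ is precisely a disjointly representable subfamily of the link $\mathcal{H}_{w_0}$. Once this equivalence is in hand in both directions, the upper bound is just the handshake inequality combined with Theorem~\ref{thm of tight upper bound}, and the lower bound reduces to the elementary count above.
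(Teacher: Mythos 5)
Your proposal is correct. The paper itself gives no proof of this statement---it is quoted from F\"uredi and Luo \cite{2020arXiv200207350F}---but your argument is exactly the connection the paper alludes to when it says F\"uredi and Luo ``gave a connection between $f(r,k)$ and the upper bound of $ex_r(n,Tr(K_{1,t}))$'': a $K_{1,t}$-trace centred at $w_0$ is the same thing as $t$ disjointly representable members of the link of $w_0$, so the degree bound $\deg(v)\le f(r-1,t)$ and the handshake count give the upper bound, while your disjoint-clique construction (blocks of size $r+t-2$) is the standard lower bound, and is precisely the construction the paper later modifies in Theorem \ref{thm new lower bound by covering} by enlarging the blocks to size $r+t-1$ and deleting a covering. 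Both directions of your dictionary are verified cleanly (distinctness of the representatives and of the lifted edges is automatic), and the arithmetic identity $\frac{n}{r+t-2}\binom{r+t-2}{r}=\frac{n}{r}\binom{r+t-3}{r-1}$ for the divisible case is right.
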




Before we improve the lower bound, we first give the following definition.
\begin{definition}
  Let $v\ge k\ge t$ and let $X$ be a set of $v$ elements (points). A $t$-$(v,k,\lambda)$ covering is a collection of $k$-subsets (blocks) of $X,$ denoted by $\mathcal{C}$, such that every $t$-subset of points occurs in at least $\lambda$ blocks in $\mathcal{C}.$ The size of a covering is just the size of $\mathcal{C}.$
\end{definition}


Now we improve the lower bound.
\begin{theorem}\label{thm new lower bound by covering}
  For any $r\ge2,t\ge2,$ if $n=a(r+t-1)+b$ with $b\le r+t-2,$ and there exists a minimal $(r-1)$-$(r+t-1,r,1)$ covering with size $c,$  then
  \begin{equation*}
    a\left({r+t-1\choose r}-c\right)+{b\choose r}\le ex_r\left(n,Tr(K_{1,t})\right).
  \end{equation*}
\end{theorem}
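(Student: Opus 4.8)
The plan is to exhibit an explicit extremal construction and then verify that it contains no trace of $K_{1,t}$; the edge count will then read off immediately. First I would partition the $n$ vertices into $a$ blocks $P_1,\ldots,P_a$, each of size $r+t-1$, together with one leftover block $P_0$ of size $b$. On each block $P_j$ with $1\le j\le a$ I fix a minimal $(r-1)$-$(r+t-1,r,1)$ covering $\mathcal{C}_j$ and let the edges inside $P_j$ be all $r$-subsets of $P_j$ \emph{except} those belonging to $\mathcal{C}_j$; on $P_0$ I take the complete $r$-uniform hypergraph. Since each $P_j$ contributes $\binom{r+t-1}{r}-c$ edges and $P_0$ contributes $\binom{b}{r}$, the total is exactly $a\left(\binom{r+t-1}{r}-c\right)+\binom{b}{r}$, matching the claimed bound. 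Thus the entire content of the proof is to show this $\mathcal{H}$ is $Tr(K_{1,t})$-free.

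The first step is a locality reduction. Suppose for contradiction that $\mathcal{H}$ contained $K_{1,t}$ as a trace, with base vertices $w_0,w_1,\ldots,w_t$ and distinct edges $f_1,\ldots,f_t$ satisfying $f_i\cap W=\{w_0,w_i\}$ for $W:=\{w_0,\ldots,w_t\}$. Every edge of $\mathcal{H}$ lies inside a single block and $w_0\in f_i$ for all $i$; if two of the $f_i$ lay in different (hence disjoint) blocks, then $w_0$ would lie in their empty intersection, a contradiction. Hence all the $f_i$ lie in one common block $P$, and consequently every base vertex lies in $P$ as well, so it suffices to rule out a trace inside a single block.

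The second step disposes of the leftover block: if $P=P_0$ then $|P|=b\le r+t-2$, while $|W|=t+1$, so $|P\setminus W|\le r-3$; since each $f_i$ must place $r-2$ of its vertices outside $W$, no such edge can even exist. The heart of the argument is the case $P=P_j$, where $|P\setminus W|=r-2$ exactly. This forces $f_i=\{w_0,w_i\}\cup(P\setminus W)$ for every $i$, so all the $f_i$ contain the fixed $(r-1)$-set $g:=\{w_0\}\cup(P\setminus W)$, and in fact $\{f_1,\ldots,f_t\}$ is precisely the family of all $r$-supersets $g\cup\{w_i\}$ of $g$ in $P$. The crux—and the only place where the covering hypothesis is used—is that $\mathcal{C}_j$ covers the $(r-1)$-set $g$, so some block $B\in\mathcal{C}_j$ satisfies $g\subseteq B$; as $|B|=r$, we get $B=g\cup\{w_i\}=f_i$ for some $i$. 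But that $f_i$ was deleted from $\mathcal{H}$, contradicting $f_i\in E(\mathcal{H})$. I expect this final covering step to be the decisive point, the rest being the structural bookkeeping above; together they establish that $\mathcal{H}$ is $Tr(K_{1,t})$-free and yield the stated lower bound.
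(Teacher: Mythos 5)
Your proof is correct and follows essentially the same route as the paper: the identical construction (cliques of size $r+t-1$ minus a minimal $(r-1)$-$(r+t-1,r,1)$ covering, plus a leftover clique), the same counting argument for the small block, and the same key observation that the $t$ trace edges must all be the $r$-supersets of a fixed $(r-1)$-set, one of which was deleted by the covering. Your write-up is in fact somewhat more careful than the paper's, notably in the locality reduction showing all trace edges lie in a single block, but the underlying argument is the same.
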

\begin{proof}
  Let each component of $\mathcal{H}$ be a clique of size $r+t-1$ such that there are as many cliques as possible. For each clique, we delete a minimal $(r-1)$-$(r+t-1,r,1)$ covering. And the left vertices just form a clique. If $n=a(r+t-1)+b$ with $b\le r+t-2,$ then $e(\mathcal{H})=a\left({r+t-1\choose r}-c\right)+{b\choose r}$. It only needs to show that $\mathcal{H}$ is $Tr(K_{1,t})$-free.
  \begin{enumerate}
    \item If the component with vertex size $b\le r+t-2$ contains a $Tr(K_{1,t})$ with base vertices $\{v_1,\ldots,v_{t+1}\},$ then each edge must contain at least $3$ base vertices, a contradiction.
    \item If the component with vertex size $r+t-1$ contains a $Tr(K_{1,t})$ with vertices $\{v_1,\ldots,v_{t+1}\}$ and edges $\{v_1v_2, v_1v_3,\ldots, v_1v_{t+1}\}.$ By the definition of $Tr(K_{1,t}),$ the corresponding hyperedges of $\mathcal{H}$ must share the left common $r-2$ vertices $\{v_{t+2},\ldots,v_{r+t-1}\}$. And these $t$ hyperedges are all the edges containing $\{v_1,v_{t+2},\ldots,v_{r+t-1}\}$ in the clique, which is a contradiction since we have deleted an $(r-1)$-$(r+t-1,r,1)$ covering.
    \end{enumerate}
\end{proof}
Now, we give some exact results for the values of $ex_r(n,Tr(K_{1,t})).$

\begin{theorem}
  For the case $r=3$,  the following holds.
  \begin{enumerate}
    \item If $t+2 \equiv 0 \mod 6$ and $t+2\mid n,$ then $ex_3(n,Tr(K_{1,t}))=\frac{n}{6}(t^2-2).$
    \item If $t+2 \equiv 1 \text{ or } 3 \mod 6$ and $t+2\mid n,$ then $ex_3(n,Tr(K_{1,t}))=\frac{n}{6}(t^2-1).$
  \end{enumerate}
\end{theorem}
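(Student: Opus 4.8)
The plan is to sandwich each target value between the Füredi--Luo upper bound, specialised to $r=3$, and the covering lower bound of Theorem \ref{thm new lower bound by covering}, and to observe that these two bounds coincide exactly in the stated residue classes. Both statements assert an equality, so I expect no asymptotics: the whole content is that, for these $t$, an optimal covering design makes the lower bound meet the upper bound.

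For the upper bound I would put $r=3$ in Theorem \ref{upper and lower bounds for ex(n,Tr(K))}, giving $ex_3(n,Tr(K_{1,t}))\le \frac{n}{3}f(2,t)$, and then substitute the exact value $f(2,t)=\binom{t+1}{2}-\lceil\frac{t+1}{2}\rceil$ from Theorem \ref{thm of tight upper bound}. A one-line parity computation yields $f(2,t)=\frac{t^2-2}{2}$ when $t$ is even and $f(2,t)=\frac{t^2-1}{2}$ when $t$ is odd. Since $t+2\equiv 0\pmod{6}$ forces $t$ even while $t+2\equiv 1,3\pmod{6}$ forces $t$ odd, this produces precisely the claimed upper bounds $\frac{n}{6}(t^2-2)$ and $\frac{n}{6}(t^2-1)$, respectively.

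For the matching lower bound I would apply Theorem \ref{thm new lower bound by covering} with $r=3$. As $t+2\mid n$, I may write $n=a(t+2)$ with remainder $b=0$, so the bound becomes $a\big(\binom{t+2}{3}-c\big)$, where $c$ is the minimum size of a $2$-$(t+2,3,1)$ covering. The crux is supplying the exact covering number of $v:=t+2$ in each class. When $v\equiv 1$ or $3\pmod{6}$ a Steiner triple system $S(2,3,v)$ exists, and its $\frac{v(v-1)}{6}$ triples cover every pair exactly once, hence form a minimal covering with $c=\frac{(t+2)(t+1)}{6}$. When $v\equiv 0\pmod{6}$ the Fort--Hedlund (Sch\"onheim) value gives a minimal covering of size $c=\frac{v^2}{6}=\frac{(t+2)^2}{6}$. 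Plugging these into $\binom{t+2}{3}-c$ and simplifying, the lower bound collapses to $\frac{n}{6}(t^2-1)$ in the first case and $\frac{n}{6}(t^2-2)$ in the second, matching the upper bound in each case.

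The work is thus concentrated in invoking the right classical facts rather than in any delicate estimate: the exact covering numbers $C(v,3,2)$, equivalently the existence of Steiner triple systems for $v\equiv 1,3\pmod{6}$ and the Fort--Hedlund value for $v\equiv 0\pmod{6}$, together with the parity split of $f(2,t)$. The only genuinely conceptual point to flag is why these specific residues are singled out: the class $v\equiv 5\pmod{6}$ (that is, $t\equiv 3\pmod{6}$) is deliberately excluded because there $C(v,3,2)$ exceeds the Sch\"onheim bound by one, so the covering lower bound falls short of the Füredi--Luo upper bound and exactness fails.
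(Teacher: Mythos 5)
Your proposal is correct and follows essentially the same route as the paper: the Füredi--Luo upper bound $\frac{n}{3}f(2,t)$ with the exact value $f(2,t)=\binom{t+1}{2}-\lceil\frac{t+1}{2}\rceil$ split by parity, matched against the covering-based lower bound of Theorem \ref{thm new lower bound by covering} instantiated with a Steiner triple system for $t+2\equiv 1,3 \pmod 6$ and the Fort--Hedlund covering number $\frac{(t+2)^2}{6}$ for $t+2\equiv 0\pmod 6$. (Your closing aside is slightly incomplete --- the classes $t+2\equiv 2,4\pmod 6$ are also omitted from the theorem, not just $t+2\equiv 5\pmod 6$ --- but this does not affect the proof itself.)
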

\begin{proof}
We first prove the upper bounds. By Theorem \ref{thm of tight upper bound} and Theorem \ref{upper and lower bounds for ex(n,Tr(K))}, we know that $ex_3(n,Tr(K_{1,t}))\le \frac{n}{2}f(2,t)=\frac{n}{3}\left({t+1\choose 2}-\lceil\frac{t+1}{2}\rceil\right).$

\begin{enumerate}
  \item If $t+2 \equiv 0$ $\mod 6$, then $t$ is even, and $ex_3(n,Tr(K_{1,t}))\le \frac{n}{3}\left({t+1\choose 2}-\lceil\frac{t+1}{2}\rceil\right)=\frac{n}{3}(\frac{t^2+t}{2}-\frac{t}{2}-1)=\frac{n}{6}(t^2-2).$
  \item If $t+2 \equiv 1 \text{ or } 3$ $\mod 6$, then $t$ is odd, and $ex_3(n,Tr(K_{1,t}))\le \frac{n}{3}\left({t+1\choose 2}-\lceil\frac{t+1}{2}\rceil\right)=\frac{n}{3}\left({t+1\choose 2}-\frac{t+1}{2}\right)=\frac{n}{6}(t^2-1).$
\end{enumerate}

   Now we prove the lower bounds. If $t+2 \equiv 0$ $\mod 6,$ it is known (see \cite{MR2246267}) that there exists a $2$-$(t+2,3,1)$ covering of size $\frac{(t+2)^2}{6},$ so when $t+2\mid n,$ the lower bound follows from Theorem \ref{thm new lower bound by covering}. If $t+2 \equiv 1$ or $3$ $\mod 6,$ the well-known Steiner Triple System $(t+2,3,1)$ exists, thus if $t+2\mid n,$ then the lower bound also follows from Theorem \ref{thm new lower bound by covering}.
\end{proof}

\begin{theorem}
  When $r=2k$ and $2k(k+1)\mid n,$ we have that $ex_r(n,Tr(K_{1,3}))=\frac{n(k+1)}{2}.$
\end{theorem}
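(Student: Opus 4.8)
The plan is to establish matching upper and lower bounds and conclude equality; the upper bound is immediate from the cited results, while all the work lies in the lower bound. For the upper bound, I would first observe that since $r=2k$ the quantity $(r-1)+2=2k+1$ is odd, so Theorem~\ref{thm of tight upper bound} evaluates to
\[
f(r-1,3)=\left\lfloor\tfrac{2k+1}{2}\right\rfloor\left\lceil\tfrac{2k+1}{2}\right\rceil=k(k+1).
\]
Plugging this into Theorem~\ref{upper and lower bounds for ex(n,Tr(K))} with $t=3$ gives $ex_r(n,Tr(K_{1,3}))\le\frac{n}{r}f(r-1,3)=\frac{n}{2k}\cdot k(k+1)=\frac{n(k+1)}{2}$, which holds for all $n$.

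For the lower bound I would apply Theorem~\ref{thm new lower bound by covering} with $t=3$, so that $r+t-1=2k+2$. Since $2k(k+1)\mid n$ we have in particular $2(k+1)\mid n$, hence $n=a(2k+2)$ with $a=\frac{n}{2k+2}$ and remainder $b=0$, so $\binom{b}{r}=0$ and the theorem yields
\[
ex_r(n,Tr(K_{1,3}))\ge a\left(\binom{2k+2}{2k}-c\right),
\]
where $c$ is the size of a $(2k-1)$-$(2k+2,2k,1)$ covering. Using $\binom{2k+2}{2k}=(k+1)(2k+1)$, a covering of size $c=k(k+1)$ would give $a\big((k+1)(2k+1)-k(k+1)\big)=a(k+1)^2=\frac{n(k+1)}{2}$, matching the upper bound. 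So the task reduces to constructing a $(2k-1)$-$(2k+2,2k,1)$ covering of size exactly $k(k+1)$.

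This last step is where I expect the main obstacle, and I would handle it by complementation. On a ground set $X$ with $|X|=2k+2$, identify each block (a $2k$-subset) with its $2$-element complement and each $(2k-1)$-subset $S$ with its $3$-element complement; then $S\subseteq B$ exactly when $X\setminus B\subseteq X\setminus S$. Hence a family of blocks is a covering if and only if the family of their $2$-element complements, viewed as the edges of a graph $G$ on $X$, meets every $3$-subset of $X$, i.e.\ $G$ has no independent set of size $3$, equivalently $\overline{G}$ is triangle-free. By Mantel's theorem a triangle-free graph on $2k+2$ vertices has at most $(k+1)^2$ edges, attained by $K_{k+1,k+1}$, so the minimum possible number of edges of $G$ is $\binom{2k+2}{2}-(k+1)^2=k(k+1)$, attained by $G=\overline{K_{k+1,k+1}}$, the disjoint union of two copies of $K_{k+1}$. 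This $G$ has independence number $2$, so the blocks obtained by complementing its edges form a valid covering of size exactly $k(k+1)$, which is what the lower bound needs.

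The genuine content is entirely in the complementation, which converts the covering-design question into the extremal Mantel problem; after that the arithmetic is forced. The one thing I would double-check is the parity bookkeeping, namely that $(r-1)+2$ is odd so that $f(r-1,3)=k(k+1)$ rather than $(k+1)^2$, and that $\binom{2k+2}{2k}=(k+1)(2k+1)$, since the two bounds coincide only if these evaluate exactly as claimed.
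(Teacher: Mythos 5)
Your proposal is correct and follows the same skeleton as the paper's proof: the upper bound comes from Theorems \ref{thm of tight upper bound} and \ref{upper and lower bounds for ex(n,Tr(K))} with the same parity computation $f(2k-1,3)=k(k+1)$, and the lower bound comes from Theorem \ref{thm new lower bound by covering}, reducing everything to exhibiting a $(2k-1)$-$(2k+2,2k,1)$ covering of size $k(k+1)$. The one point of divergence is how that covering is obtained. The paper cites the covering-design form of Tur\'an's theorem from \cite{MR2246267}: with $q=\lfloor n'/(n'-h-1)\rfloor$ there is an $h$-$(n',n'-2,1)$ covering of size $qn'-\binom{q+1}{2}(n'-h-1)$, which for $n'=2k+2$, $h=2k-1$, $q=k+1$ evaluates to $k(k+1)$. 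You instead prove the needed special case from scratch: complementation identifies blocks with their $2$-element complements, so the covering condition becomes the statement that the graph $G$ formed by these complements has independence number at most $2$, i.e.\ $\overline{G}$ is triangle-free, and Mantel's theorem gives the minimum $|E(G)|=\binom{2k+2}{2}-(k+1)^2=k(k+1)$, attained by two disjoint copies of $K_{k+1}$. This is exactly the mechanism underlying the cited covering theorem (the general case is Tur\'an's graph theorem under the same complementation), so your argument is a self-contained, explicit rederivation of the ingredient the paper outsources to a reference; it also shows, as a bonus, that $k(k+1)$ is the minimum possible covering size, though only existence is needed for the bound.
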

\begin{proof}
  Assume that $r,k$ and $n$ are integers satisfying the condition of the theorem. $ex_r(n,Tr(K_{1,3}))\le\frac{n(k+1)}{2}$ follows from Theorem \ref{thm of tight upper bound} and Theorem \ref{upper and lower bounds for ex(n,Tr(K))}.

  Tur\'an's theorem (see \cite{MR2246267}) says that suppose $q=\lfloor\frac{n'}{n'-h-1}\rfloor,$ then there exists an $h$-$(n',n'-2,1)$ covering of size $qn'-{q+1\choose 2}(n'-h-1).$ In this case $n'=r+2, h=r-1, q=\lfloor\frac{r+2}{2}\rfloor=k+1,$ and there exists an $(r-1)$-$(r+2,r,1)$ covering of size $(k+1)(2k+2)-2{k+2\choose2}=k(k+1).$ Thus $ex(n,Tr(K_{1,3}))\ge\frac{n(k+1)}{2}$ follows from Theorem \ref{thm new lower bound by covering}.
\end{proof}

\section{Improved upper bounds for $ex_3(n,K_{2,t})$ when $t$ is small}
In this section, we focus on $3$-uniform hypergraphs.
In \cite{MR4245297}, Luo and Spiro gave the following theorem.
\begin{theorem}[\cite{MR4245297}]\label{Luo for K 2,t}
  For $t\ge 14,$
  $$
  ex_3(n,Tr(K_{2,t}))\le\frac{1}{6}\left(t^{3/2}+55t\sqrt{\log t}\right)n^{3/2} + o(n^{3/2}).
  $$
\end{theorem}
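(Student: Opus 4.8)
The plan is to bound $m:=|E(\mathcal{H})|$ for a $Tr(K_{2,t})$-free $3$-graph $\mathcal{H}$ on $n$ vertices by the number of triangles of its $2$-shadow $\partial\mathcal{H}$ (the graph of all pairs lying in some hyperedge), and then to estimate that triangle count. Since every hyperedge is itself a triangle of $\partial\mathcal{H}$ and distinct hyperedges give distinct triangles, $m\le\#\triangle(\partial\mathcal{H})$; the extremal constructions (all triangles of an $n$-vertex $K_{2,t}$-free graph with the most triangles) show this inequality is essentially tight and that the answer should be governed by $ex(n,K_3,K_{2,t})$.

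The clean structural input is that for every vertex $v$ the link $L(v)=\{\,\{y,z\}:\{v,y,z\}\in E(\mathcal{H})\,\}$ is $K_{2,t}$-free: a copy of $K_{2,t}$ in $L(v)$ with parts $\{p,q\}$ and $\{w_1,\dots,w_t\}$ means $\{v,p,w_i\},\{v,q,w_i\}\in E(\mathcal{H})$ for all $i$, and taking base vertices $W=\{p,q,w_1,\dots,w_t\}$ together with these $2t$ distinct hyperedges realizes $Tr(K_{2,t})$, since the private vertex $v\notin W$. More generally, a $K_{2,s}$ in $\partial\mathcal{H}$ with centres $a,b$ and leaves $c_1,\dots,c_s$ yields $Tr(K_{2,t})$ as soon as $t$ of the leaves admit representing hyperedges $\{a,c_i,x_i\},\{b,c_i,y_i\}$ whose third vertices $x_i,y_i$ avoid $\{a,b\}$ and the chosen leaves.

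Next I would control the triangle count. If a graph $G$ on $n$ vertices is $K_{2,s}$-free then any two vertices have at most $s-1$ common neighbours, so for each $v$ the induced graph on $N(v)$ has maximum degree $\le s-1$, hence at most $\tfrac12(s-1)\deg(v)$ edges; summing over $v$ gives $3\#\triangle(G)\le(s-1)e(G)$, and together with the K\H{o}v\'ari--S\'os--Tur\'an bound $e(G)\le\tfrac12\sqrt{s-1}\,n^{3/2}+o(n^{3/2})$ this yields
\[
\#\triangle(G)\le\tfrac16(s-1)^{3/2}n^{3/2}+o(n^{3/2}).
\]
Thus it suffices to show that, up to a sparse exceptional set of pairs contributing only $o(n^{3/2})$ triangles, $\partial\mathcal{H}$ behaves like a $K_{2,s}$-free graph with $s=t+O(\sqrt{t\log t})$; expanding $(s-1)^{3/2}=t^{3/2}+O(t\sqrt{\log t})$ then produces the stated shape, the constant $55$ coming from the precise slack.

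The main obstacle is that $\partial\mathcal{H}$ need not be $K_{2,t}$-free: a book $\{a,b,c_1\},\dots,\{a,b,c_s\}$ is $Tr(K_{2,t})$-free yet its shadow contains $K_{2,s}$. Such dense spots correspond to pairs $\{a,b\}$ with large shadow-codegree whose common neighbours are \emph{pinned}, i.e. their representing hyperedges are forced to use third vertices inside $\{a,b\}\cup\{c_j\}$; crucially a book has only $s$ triangles, so pinned structures are triangle-poor. The heart of the argument is therefore a quantitative extraction lemma: if a pair has more than $t+C\sqrt{t\log t}$ common shadow-neighbours, then after discarding the few pinned ones a random choice of $t$ leaves together with a random admissible assignment of private third vertices yields $Tr(K_{2,t})$ with positive probability. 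The $\sqrt{t\log t}$ slack is exactly what a union bound over the $\Theta(t)$ chosen leaves (each failing to admit a private third vertex only with polynomially small probability) requires. Feeding the resulting effective codegree bound $s=t+O(\sqrt{t\log t})$ into the triangle estimate, and checking that the pinned and high-codegree pairs contribute $o(n^{3/2})$ triangles, gives $m\le\#\triangle(\partial\mathcal{H})\le\tfrac16\big(t^{3/2}+55t\sqrt{\log t}\big)n^{3/2}+o(n^{3/2})$; the hypothesis $t\ge14$ is used to make these lower-order estimates and the displayed constant valid.
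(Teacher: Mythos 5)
This theorem is not proved in the paper at all: it is quoted from Luo and Spiro \cite{MR4245297}, and the closest thing to a proof here is Section~3, which reproduces the skeleton of their method for small $t$. Measured against that skeleton, your proposal has two genuine gaps. The first is in your accounting of the ``pinned'' structures. The reduction $m\le\#\triangle(\partial\mathcal{H})$ gives no leverage on its own, because every hyperedge \emph{is} a shadow triangle: the book triangles you propose to discard are exactly the hyperedges of book-like subhypergraphs, so bounding them is not a boundary check, it is the theorem itself. Moreover, your claim that the pinned and high-codegree pairs contribute only $o(n^{3/2})$ triangles is false. Take spine pairs $\{a_i,b_i\}$ forming a matching on $n/3$ pairs, and page sets $P_i$ inside the remaining $2n/3$ vertices with $|P_i\cap P_j|\le t-1$ and $|P_i|=\Theta(\sqrt{tn})$ (random sets or standard packings achieve this), and let $\mathcal{H}=\{\{a_i,b_i,c\}:c\in P_i\}$. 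Any two potential centres of a trace must be spine vertices of distinct books, and their common trace-neighbourhood lies in $P_i\cap P_j$, so $\mathcal{H}$ is $Tr(K_{2,t})$-free; yet every edge of $\mathcal{H}$ is pinned and $|\mathcal{H}|=\Theta(\sqrt{t}\,n^{3/2})$. The mechanism that handles this in the actual proof is entirely absent from your proposal: one lets $A$ be the set of edges containing a pair of co-degree $1$, maps each such edge to such a pair, observes the image graph is $K_{2,t}$-free, and concludes $|A|\le ex(n,K_{2,t})\le\frac{\sqrt{t-1}}{2}n^{3/2}+o(n^{3/2})$ by K\H{o}v\'ari--S\'os--Tur\'an. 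This term is $\Theta(n^{3/2})$; it is absorbed into the stated bound only because $\sqrt{t}\ll t\sqrt{\log t}$, not because it is $o(n^{3/2})$.

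The second gap is that your ``quantitative extraction lemma'' --- the heart of the argument, where the effective co-degree $t+O(\sqrt{t\log t})$, the constant $55$, and the hypothesis $t\ge 14$ all live --- is only stated, never proved. As formulated it also cannot be true: a single book has a pair with arbitrarily many common shadow-neighbours, \emph{all} of them pinned, so ``discarding the few pinned ones'' is not available in general. The statement has to be proved for the residual hypergraph $B=\mathcal{H}\setminus A$, after the co-degree-$1$ edges have been stripped out; this is where the probabilistic argument you gesture at is actually carried out, and the version adapted in this paper (Lemma~\ref{lemma K2,t free max codegree}) already illustrates that even the weaker co-degree bound $3t-3$ requires the passage to $B$. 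The ingredients you do verify (each link is $K_{2,t}$-free; a $K_{2,s}$-free graph has at most $\frac16(s-1)^{3/2}n^{3/2}+o(n^{3/2})$ triangles) are correct but routine; the two items above constitute the proof, and the proposal supplies neither.
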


In this section, we generalise the result of Luo and Spiro for $ex_3(n,Tr(C_4))$ in \cite{MR4245297} by a similar analysis, and obtain an improved upper bound for $ex_3(n,Tr(K_{2,t}))$ when $t$ is small.

Given a hypergraph $\mathcal{H}$, we define $d_\mathcal{H}(x,y)$ to be the number of edges of $\mathcal{H}$ containing $\{x,y\}$, and we call this number the co-degree of $\{x,y\}$. We denote $A$ the set of edges containing at least one pair of vertices with co-degree $1.$ We will often identify hypergraphs by their set of edges and write e.g. $\mathcal{H}\backslash A$ to denote the hypergraph $\mathcal{H}$ after deleting some set of edges $A$ from $E(H)$.

\begin{lemma}[\cite{MR4245297}]\label{lemma K2,t free max codegree}
  Let $t\ge2$ and let $H$ be a $Tr_3(K_{2,t})$-free $3$-uniform hypergraph on $[n].$ For any pair $\{x,y\},$ we have $d_{\mathcal{H}\backslash A}(x,y)\le 3t-3.$
\end{lemma}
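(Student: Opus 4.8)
We have a $3$-uniform hypergraph $\mathcal{H}$ on $[n]$ that is $Tr_3(K_{2,t})$-free. The set $A$ consists of all edges that contain at least one pair of vertices with co-degree exactly $1$. After removing $A$, every pair of vertices lying inside some edge of $\mathcal{H}\setminus A$ has co-degree at least $2$. We want to bound $d_{\mathcal{H}\setminus A}(x,y) \le 3t-3$ for every pair $\{x,y\}$.

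**The proof strategy.** Let me fix a pair $\{x,y\}$ and let $d = d_{\mathcal{H}\setminus A}(x,y)$. The edges of $\mathcal{H}\setminus A$ containing $\{x,y\}$ look like $\{x,y,z_1\}, \ldots, \{x,y,z_d\}$ for distinct apex vertices $z_1,\ldots,z_d$. Each such edge traces an edge $\{x,z_i\}$ and an edge $\{y,z_i\}$ on the vertex set $\{x,y,z_1,\ldots,z_d\}$; together these realize a $K_{2,d}$-trace with the two degree-$d$ vertices being $x,y$ and the $d$ endpoints being the $z_i$. So if $d \ge t$ we would naively seem to get a $Tr_3(K_{2,t})$ immediately — the catch is that a trace of $K_{2,t}$ requires $2t$ **distinct** edges (the $t$ edges $\{x,z_i,\cdot\}$ tracing $\{x,z_i\}$ and the $t$ edges tracing $\{y,z_i\}$ must all be different hyperedges), whereas a single hyperedge $\{x,y,z_i\}$ traces both $\{x,z_i\}$ and $\{y,z_i\}$ at once.

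**The core combinatorial argument.** The plan is to build the required $2t$ distinct edges of a $K_{2,t}$-trace using the co-degree condition that survives in $\mathcal{H}\setminus A$. First I would try to select $t$ apex vertices $z_{i_1},\ldots,z_{i_t}$ among $z_1,\ldots,z_d$ so that for each chosen $z_{i_j}$ one can find a \emph{second} edge, distinct from $\{x,y,z_{i_j}\}$, that traces one of the two needed edges $\{x,z_{i_j}\}$ or $\{y,z_{i_j}\}$; this is exactly where the guarantee ``every pair has co-degree $\ge 2$ in $\mathcal{H}\setminus A$'' is used, since the pairs $\{x,z_{i_j}\}$ and $\{y,z_{i_j}\}$ each lie in $\ge 2$ edges. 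The obstruction is \emph{collisions}: a single auxiliary edge might simultaneously serve as the second copy for more than one of the chosen apexes, or the auxiliary edge for $z_{i_j}$ might coincide with one of the primary edges $\{x,y,z_{i_k}\}$, and I must avoid reusing any edge. Because each apex spoils only a bounded number of choices, a greedy/counting selection will succeed as long as $d$ exceeds a threshold that works out to $3t-3$; the factor $3$ comes from the fact that each auxiliary edge through $\{x,z\}$ or $\{y,z\}$ can block at most a constant number of other apexes from being chosen cleanly.

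**The main obstacle and how I would handle it.** The hard part is the careful bookkeeping to show that when $d \ge 3t-2$ one can assemble $t$ apexes together with $2t$ pairwise-distinct hyperedges forming the $K_{2,t}$-trace, i.e.\ ruling out all the ways auxiliary edges can overlap each other or overlap the primary edges. I would organize this by charging each ``bad'' apex (one whose needed auxiliary edge is forced to clash) to a specific conflicting edge and bounding the total number of apexes lost by $2t-2$, so that strictly more than $t$ apexes remain usable once $d\ge 3t-2$; the contradiction with $Tr_3(K_{2,t})$-freeness then forces $d\le 3t-3$. I expect the delicate point to be verifying that the auxiliary edges chosen for distinct surviving apexes can be taken distinct from one another and from all the $\{x,y,z_i\}$, which is precisely what pins down the constant $3$ rather than $2$.
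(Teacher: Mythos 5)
The paper itself does not prove this lemma: it is quoted from Luo and Spiro \cite{MR4245297}, so your argument can only be judged on its own terms, and it has a genuine gap rooted in a misreading of what ``trace'' means. By Definition 1.1, a hyperedge $f$ realizes the graph edge $v_\alpha v_\beta$ only if $f\cap W$ \emph{equals} $\{w_\alpha,w_\beta\}$, not merely contains it. Consequently a primary edge $\{x,y,z_i\}$, all three of whose vertices lie in your base set $W$, traces \emph{neither} $\{x,z_i\}$ nor $\{y,z_i\}$: its intersection with $W$ has size $3$, so it cannot be used as a trace edge at all. Your plan --- in which $\{x,y,z_i\}$ serves as the trace edge for one of the two pairs while a single auxiliary edge supplies the ``second copy'' for the other --- is therefore unsound: each chosen apex needs \emph{two} auxiliary edges, one through $\{x,z_i\}$ and one through $\{y,z_i\}$, neither of which is $\{x,y,z_i\}$. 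For the same reason, your identification of the obstruction as edge-distinctness is off target: distinctness is automatic, since hyperedges realizing distinct pairs have distinct intersections with $W$. The real obstruction, which your write-up never mentions, is that an auxiliary edge $\{x,z_i,w\}$ is usable only when its third vertex $w$ lies outside $W$, i.e.\ $w$ is not another chosen apex ($w\neq y$ is free, since the edge differs from $\{x,y,z_i\}$). This is where the co-degree condition and the factor $3$ actually enter: each chosen apex must in effect reserve up to two third vertices to be kept outside $W$, so $t$ apexes consume roughly $3t$ candidates.

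Second, even after correcting the framework, the heart of the lemma --- that among $3t-2$ apexes one can always choose $t$ whose reserved third vertices stay disjoint from the chosen set --- is exactly the step you leave as ``a greedy/counting selection will succeed'' and ``charging each bad apex to a specific conflicting edge,'' with no argument. This step is not routine. If you fix one auxiliary $x$-edge and one auxiliary $y$-edge per apex in advance, you obtain a digraph of out-degree at most $2$ on the apexes, and such digraphs only guarantee sets with no internal arcs of size about $|S|/5$, not $|S|/3$ (disjoint regular tournaments on five vertices show this is sharp), so a naive greedy with fixed choices cannot reach the stated constant. One must exploit the symmetry that a conflicting auxiliary edge $\{x,z_i,z_j\}$ is simultaneously an auxiliary edge for $z_j$, and handle apexes whose only auxiliary edges point into the already-chosen set (the links of $x$ and $y$ restricted to the apexes may, for instance, be perfect matchings). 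Finally, a smaller but real slip: the guarantee available is that pairs inside edges of $\mathcal{H}\setminus A$ have co-degree at least $2$ \emph{in $\mathcal{H}$}; their co-degree in $\mathcal{H}\setminus A$ may well be $1$, so your auxiliary edges must be allowed to come from $A$ (which is fine for exhibiting a trace in $\mathcal{H}$), whereas your phrasing asserts the stronger, false statement.
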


Following the notations in \cite{MR4245297}, for $v\in V(\mathcal{H}),$ we define the $1$ and $2$-neighborhood of $v$ as
$$N_1(v)=\{x\in V(\mathcal{H}): \exists e \in E(\mathcal{H}), \{v,x\}\subseteq e\}.$$
$$N_2(v)=\{x\in V(\mathcal{H})\backslash(N_1(v)\cup\{v\}):\exists h\in E(\mathcal{H}),x\in e, e\cap N_1(v)\neq\emptyset\}.$$
That is, $N_i(v)$ is the set of vertices that are at distance $i$ from $v.$

When the co-degrees of all pairs of vertices in $V(\mathcal{H})$ are at most $k$, it is observed in \cite{MR4245297} that if $E$ is a set of edges containing some vertex $v$ and $V$ is the set of vertices $u\neq v$ with $u\in e$ for some $e\in E,$ then
\begin{equation}\label{equation v and e}
  |V|\ge \frac{2}{k}|E|,
\end{equation}
as each vertex in $V$ is contained in at most $k$ edges with $v$.

Let $\mathcal{H}$ be an $n$-vertex $3$-uniform hypergraph with no $K_{2,t}$ trace for $t\ge 3$. Let $A$ be the edges with at least one pair of co-degree $1,$ and $B=\mathcal{H}\backslash A.$ It was shown in \cite{MR4245297} that
$$|A|\le ex(n, K_{2,t})\le \frac{\sqrt{t-1}}{2}n^{3/2}+o(n^{3/2}).$$

Now we focus on the size of $B.$ From now on, we write $d_B(v),d_B(u,v)$ as $d(v),d(u,v).$ By Lemma \ref{lemma K2,t free max codegree}, $d(x,y)\le 3t-3$ for all $\{x,y\}\subseteq V(\mathcal{H}).$ For any vertex $v$ we also let $N_1(v)$ and $V_2(v)$ denote the $1$ and $2$-neighborhoods of $v$ in $B$, respectively.

\begin{lemma}\label{lemma of the size of two neighbor intersection}
  For any $x,y\in V(\mathcal{H}),$ $|N_1(x)\cap N_1(y)|\le (t-1)(6t-2).$
\end{lemma}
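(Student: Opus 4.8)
The plan is to argue by contradiction: assuming $|N_1(x)\cap N_1(y)|>(t-1)(6t-2)$, I will produce a $K_{2,t}$-trace in which $x$ and $y$ play the roles of the two vertices of degree $t$ in $K_{2,t}$, contradicting that $\mathcal{H}$ is $Tr_3(K_{2,t})$-free. Write $C=N_1(x)\cap N_1(y)$. Each $z\in C$ lies in at least one edge of $B$ through $\{x,z\}$ and at least one through $\{y,z\}$; since $B$ is $3$-uniform these have the form $\{x,z,u\}$ and $\{y,z,v\}$, and by Lemma \ref{lemma K2,t free max codegree} there are at most $3t-3$ of each kind (that is, $d(x,z),d(y,z)\le 3t-3$). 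First I would fix, for every $z\in C$, one edge $e_z=\{x,z,u_z\}\in B$ and one edge $f_z=\{y,z,v_z\}\in B$, chosen so that $u_z\neq y$ and $v_z\neq x$ whenever this is possible; the only $z$ for which it is impossible are those whose sole $x$-edge (resp.\ $y$-edge) is the degenerate edge $\{x,y,z\}$, and I would discard these at the outset, noting that there are at most $d(x,y)\le 3t-3$ of the first type and at most $3t-3$ of the second.

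The core of the argument is a greedy selection of the base vertices $z_1,\dots,z_t\in C$ for the $t$-side of $K_{2,t}$, carried out so that a successful run of length $t$ automatically yields the trace. I would select common neighbours one at a time into a set $Z$, and whenever $z_j$ is selected I delete from the remaining pool the vertices $u_{z_j}$ and $v_{z_j}$ (so they are never chosen into $Z$) together with every common neighbour $z'$ for which $z_j\in\{u_{z'},v_{z'}\}$ (so that no later choice uses $z_j$ as a third vertex). The point of these deletions is precisely that, at the end, every third vertex $u_{z_i},v_{z_i}$ lies outside the base set $W=\{x,y,z_1,\dots,z_t\}$: then $e_{z_i}\cap W=\{x,z_i\}$ and $f_{z_i}\cap W=\{y,z_i\}$, so the $2t$ chosen edges realise $K_{2,t}$. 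Distinctness of these $2t$ hyperedges then comes for free — since each third vertex avoids $W$ and avoids $x,y$, any two of the selected edges meet $W$ in different pairs — so no separate distinctness check is needed.

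It remains to bound how many common neighbours a single selection removes. Deleting $u_{z_j},v_{z_j}$ costs at most $2$ vertices, while the common neighbours $z'$ with $z_j\in\{u_{z'},v_{z'}\}$ satisfy $\{x,z_j,z'\}\in B$ or $\{y,z_j,z'\}\in B$, so by the co-degree ceiling there are at most $d(x,z_j)+d(y,z_j)\le 2(3t-3)$ of them; this application of Lemma \ref{lemma K2,t free max codegree} simultaneously at $x$ and at $y$ is the source of the factor $6t-2=2(3t-1)$. Hence each selection (counting $z_j$ itself) removes at most $6t-2$ vertices, and after $t-1$ selections at most $(t-1)(6t-2)$ common neighbours have been used or deleted; since $|C|>(t-1)(6t-2)$, a valid $z_t$ still remains, the greedy runs to completion, and the contradiction follows.

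The main obstacle is the precise bookkeeping that turns this into the sharp constant $6t-2$ rather than a cruder $O(t)$-per-step estimate: the deletions must be charged so that each previously chosen base vertex — and not each auxiliary third vertex — is held responsible for at most $2(3t-1)$ eliminations, and the finitely many degenerate edges $\{x,y,z\}$ (whose forced third vertex is $x$ or $y$) must be removed up front and shown to be absorbed by the count. Exactly as in the treatment of $Tr_3(C_4)$ in \cite{MR4245297}, everything rests on the co-degree bound $3t-3$ of Lemma \ref{lemma K2,t free max codegree} being available on both sides $x$ and $y$; the delicate part is organising the charging tightly enough that the two one-sided contributions combine to $6t-2$ and no more.
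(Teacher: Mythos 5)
Your overall strategy---greedily selecting base vertices among the common neighbours and deleting conflicting vertices, invoking the co-degree bound of Lemma \ref{lemma K2,t free max codegree} at both $x$ and $y$---is sound, and it is essentially the paper's own argument in a different packaging (the paper discards the degenerate vertices, forms an auxiliary graph $G$ on the rest with $ij\in E(G)$ iff $\{x,u_i,u_j\}\in B$ or $\{y,u_i,u_j\}\in B$, notes $\Delta(G)\le 6t-6$, greedily extracts an independent set of size $t$, and only \emph{then} chooses the edges $e(x,i),e(y,i)$, which makes your ``third vertex outside $W$'' check automatic). However, your bookkeeping does not close. You discard up front the vertices $z$ whose only $x$-edge or only $y$-edge is $\{x,y,z\}$; all such $z$ satisfy $\{x,y,z\}\in B$, so there are at most $3t-3$ of them. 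But this discard is never charged in your final tally: you argue that $t-1$ selections remove at most $(t-1)(6t-2)$ vertices, so $|C|>(t-1)(6t-2)$ leaves a valid $z_t$. In fact the pool the greedy draws from has size only $|C|-(3t-3)$, and with your per-step cost of $6t-2$ the guaranteed remainder before the last selection is $|C|-(3t-3)-(t-1)(6t-2)\le 1-(3t-3)=4-3t<0$. As written, your argument only proves $|N_1(x)\cap N_1(y)|\le (t-1)(6t-2)+3t-3$, not the stated bound. You flagged this absorption issue yourself as ``the delicate part,'' but you never carried it out, and with a per-step charge of $6t-2$ it \emph{cannot} be carried out---there is no slack left.

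The fix is a tighter per-step count, which recovers exactly the paper's arithmetic. Every vertex you delete at step $j$ other than $z_j$ itself---namely $u_{z_j}$, $v_{z_j}$, and each $z'$ with $z_j\in\{u_{z'},v_{z'}\}$---is the third vertex of an edge of $B$ containing the pair $\{x,z_j\}$ or the pair $\{y,z_j\}$ (indeed $e_{z_j}=\{x,z_j,u_{z_j}\}\in B$, and $u_{z'}=z_j$ means $\{x,z',z_j\}\in B$), so by the co-degree bound there are at most $2(3t-3)=6t-6$ such vertices \emph{in total}; your ``$+2$'' for $u_{z_j},v_{z_j}$ double-counts them. Hence each selection removes at most $6t-5$ vertices, not $6t-2$, and the full budget
$(3t-3)+(t-1)(6t-5)=(t-1)(6t-2)$
is met exactly: this is precisely the paper's split (at most $3t-3$ degenerate vertices, then a greedy independent set in a graph of maximum degree $6t-6$ on the remaining $(t-1)(6t-5)+1$ vertices, costing $6t-5$ per selection). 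With that one correction your proof is complete and coincides in substance with the paper's.
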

\begin{proof}
  Suppose that there exist $x,y\in V(\mathcal{H})$ and some set $\{u_1,u_2,\ldots,u_{(t-1)(6t-2)+1}\}\subseteq N_1(x)\cap N_1(y).$ At most $3t-3$ $u_i$'s, say $\{u_{(t-1)(6t-5)+2},u_{(t-1)(6t-5)+3},\ldots,u_{(t-1)(6t-2)+1}\}$, are in edges of the form $\{x,y,u_i\}\in B.$ Let $G$ be a graph on $[(t-1)(6t-5)+1]$ where $ij\in E(G)$ if and only if either $\{x,u_i,u_j\}\in B$ or $\{y,u_i,u_j\}\in B$. Because pairs in $B$ have co-degree at most $3t-3,$ we have that $\Delta(G)\le6t-6.$ By greedy algorithm, we can always find a $\overline{K_t}$, say $\{u_1,u_2,\ldots,u_{t}\}.$ Let $e(x,i)$ and $e(y,i)$ be the edges of $B$ containing $\{x,u_i\}$ and $\{y,u_i\}$ for $1\le i\le t$, respectively. Note that both $y,u_j\notin e(x,u_i)$ and $x,u_j\notin e(y,u_i)$ hold for $i\neq j.$ Thus, these $2t$ edges form a $K_{2,t}$ trace in $B$, which is a contradiction.
\end{proof}

Fix any vertex $v\in V(\mathcal{H}).$ We define $E_u=\{e\in B: e\cap N_1(v)=\{u\}\}$ and $V_u=\{w\in N_2(v): \exists e\in E_u,w\in e\}$. Since $V_u\subseteq N_1(u)$ for all $u,$ we have the following corollaries.
\begin{corollary}
  Let $e=\{v,u,w\}\in B$ be any edge containing $v.$ Then $|V_u\cap V_w|\le (t-1)(6t-2).$
\end{corollary}
\begin{proof}
  It follows directly from Lemma \ref{lemma of the size of two neighbor intersection}.
\end{proof}

\begin{corollary}
  For all $u\in N_1(v),$
  $$|V_u|\ge\frac{2}{3t-3}\left(d(u)-(3t-3)-3(t-1)^2(6t-2)\right).$$
\end{corollary}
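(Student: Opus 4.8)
The plan is to bound $|E_u|$ from below by a counting argument and then convert this into the desired bound on $|V_u|$ via the vertex–edge inequality \eqref{equation v and e}.

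First I would estimate $|E_u|$ by counting the edges of $B$ through $u$ that \emph{fail} to lie in $E_u$. Since $u\in N_1(v)$, every edge $e\ni u$ already satisfies $u\in e\cap N_1(v)$, so $e\in E_u$ precisely when neither of the two remaining vertices of $e$ lies in $N_1(v)$. Thus an edge $e\ni u$ is ``bad'' exactly when it contains a second vertex of $N_1(v)$ (or contains $v$ itself, whose third vertex is then forced into $N_1(v)$). I would split the bad edges into two types. Those containing $v$ number at most $d(u,v)\le 3t-3$ by Lemma \ref{lemma K2,t free max codegree}. Those avoiding $v$ but meeting $N_1(v)$ in a vertex $a\neq u$ have $a\in N_1(u)\cap N_1(v)$; summing the co-degree bound $d(u,a)\le 3t-3$ over all such $a$ and applying Lemma \ref{lemma of the size of two neighbor intersection} gives at most $(t-1)(6t-2)(3t-3)=3(t-1)^2(6t-2)$ of them. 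Subtracting both counts from $d(u)$ yields
$$|E_u|\ge d(u)-(3t-3)-3(t-1)^2(6t-2).$$

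Next I would observe that every vertex of an edge of $E_u$ other than $u$ lies in $N_2(v)$: such a vertex is neither $v$ nor a member of $N_1(v)$, yet it sits in an edge meeting $N_1(v)$ at $u$, so it satisfies the defining condition of $N_2(v)$. Hence the set of non-$u$ vertices covered by $E_u$ is exactly $V_u$. Since each such vertex shares at most $3t-3$ edges with $u$ by Lemma \ref{lemma K2,t free max codegree}, the inequality \eqref{equation v and e} applied with the edge set $E_u$ (all containing $u$) and $k=3t-3$ gives $|V_u|\ge\frac{2}{3t-3}|E_u|$. Combining this with the lower bound on $|E_u|$ produces the claimed inequality.

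There is no deep obstacle here; the argument is a clean piece of bookkeeping, and the only place demanding care is the accounting for the bad edges. Specifically I would make sure that overcounting edges whose two extra vertices both lie in $N_1(v)$ only inflates the bad-edge estimate (hence only helps the lower bound on $|E_u|$), and that the edge set handed to \eqref{equation v and e} is precisely $E_u$, every member of which contains $u$ and contributes its remaining two vertices to $V_u$.
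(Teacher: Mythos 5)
Your proof is correct and follows essentially the same route as the paper: bound $|E_u|$ from below by discarding at most $3t-3$ edges through $v$ and at most $(t-1)(6t-2)\cdot(3t-3)=3(t-1)^2(6t-2)$ edges meeting $N_1(v)$ in a second vertex (via Lemma \ref{lemma of the size of two neighbor intersection} and the co-degree bound), then apply \eqref{equation v and e} with $k=3t-3$. Your explicit check that the non-$u$ vertices of edges in $E_u$ indeed lie in $N_2(v)$, so that \eqref{equation v and e} really outputs $|V_u|$, is a detail the paper leaves implicit, and it is a worthwhile addition.
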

\begin{proof}
  Note that $E_u$ consists of every edge containing $u$ except for at most $3t-3$ edges which also contain $v$ and the edges $\{e\in B: u\in e, |e\cap N_1(v)|\ge2\}.$ We claim that the latter set has cardinality at most $3(t-1)^2(6t-2)$. Indeed, any such edge would contribute a vertex to $N_1(u)\cap N_1(v),$ of which there are at most $(t-1)(6t-2)$ vertices by the previous lemma. Each of such vertices can be contained in at most $3t-3$ edges together with $u$ because $B$ has maximum co-degree at most $3t-3.$ Therefore, we have that $|E_u|\ge d(u)-(3t-3)-3(t-1)^2(6t-2).$

   Since $B$ has maximum co-degree at most $3t-3,$ and by (\ref{equation v and e}), we have that $|V_u|\ge\frac{2}{3t-3}|E_u|.$ This completes the proof.
\end{proof}

\begin{lemma}
  $\sum_{u\in N_1(v)}|V_u|\le (t-1)(6t-2)n.$
\end{lemma}
\begin{proof}
  If $|N_1(v)|\le(t-1)(6t-2),$ then the conclusion follows directly since $|V_u|\le n.$ Now we assume that $|N_1(v)|>(t-1)(6t-2),$ if for any distinct $(t-1)(6t-2)+1$ vertices $u_1,\ldots,u_{(t-1)(6t-2)+1}\in N_1(v),$ there exists a vertex $w\in V_{u_1}\cap\cdots\cap V_{u_{(t-1)(6t-2)+1}},$ then we have that $|N_1(v)\cap N_1(w)|>(t-1)(6t-2)+1,$  which contradicts to Lemma \ref{lemma of the size of two neighbor intersection}. Thus, any vertex can be contained in at most $(t-1)(6t-2)$ different $V_u's,$ the conclusion follows by double counting.
\end{proof}

  By the discussions above, we have that
  $$\sum_{u\in N_1(v)}d(u)\le\sum_{u\in N_1(v)}\left(\frac{3t-3}{2}|V_u|+f(t)\right)\le3(t-1)^2(3t-1)n+2d(v)f(t),$$
  where $f(t)=3t-3+3(t-1)^2(6t-2),$ and the last inequality comes from the fact that $|N_1(v)|\le 2d(v).$

  Let $d=3e(\mathcal{H})/n$ be the average degree of $\mathcal{H}.$ We have
  $$d^2n\le \sum_{u\in V(\mathcal{H})}d(u)^2\le\sum_{u\in V(\mathcal{H})}\sum_{v\in N_1(u)}d(u)=\sum_{v\in V(\mathcal{H})}\sum_{u\in N_1(v)}d(u)\le3(t-1)^2(3t-1)n^2+2f(t)dn.$$
  Therefore, $d\le \sqrt{3(3t-1)}(t-1)n + O(t),$ where $O(t)$ means that it is a constant depending only on $t.$ Thus, $|B|=dn/3\le\frac{\sqrt{3(3t-1)}(t-1)}{3}n^{3/2}+O(n).$

Now the following theorem can be derived directly.
\begin{theorem}\label{theorem new result for K 2t}
Let $t\ge 3$ be an integer. We have that
$$ex_3(n,Tr(K_{2,t}))\le \left(\frac{\sqrt{3(3t-1)}(t-1)}{3}+\frac{\sqrt{t-1}}{2} \right)n^{3/2} + o(n^{3/2}).$$
\end{theorem}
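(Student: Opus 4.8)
The plan is to write $e(\mathcal{H})=|A|+|B|$ for a $Tr(K_{2,t})$-free $3$-uniform hypergraph $\mathcal{H}$ on $n$ vertices, where $A$ is the set of edges carrying at least one co-degree-$1$ pair and $B=\mathcal{H}\setminus A$, and to bound the two parts by the two summands of the statement. For $A$, I would invoke the bound recalled above from \cite{MR4245297}: recording one co-degree-$1$ pair per edge of $A$ gives an injection into a graph on $V(\mathcal{H})$ (distinct edges give distinct pairs, since a co-degree-$1$ pair lies in a unique hyperedge) which turns out to be $K_{2,t}$-free, so $|A|\le ex(n,K_{2,t})$, and the K\H{o}v\'ari--S\'os--Tur\'an estimate yields $|A|\le\frac{\sqrt{t-1}}{2}n^{3/2}+o(n^{3/2})$, exactly the second summand of the theorem.

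The real content, and where I expect the main obstacle to lie, is the estimate for $|B|$. By Lemma~\ref{lemma K2,t free max codegree} every pair has co-degree at most $3t-3$ in $B$, so $B$ behaves like a bounded-co-degree hypergraph and one can run a second-neighborhood expansion from each vertex $v$. The pivotal input is Lemma~\ref{lemma of the size of two neighbor intersection}, which caps $|N_1(x)\cap N_1(y)|$ at $(t-1)(6t-2)$; from it the corollary above gives the lower bound $|V_u|\ge\frac{2}{3t-3}\bigl(d(u)-f(t)\bigr)$ with $f(t)=3t-3+3(t-1)^2(6t-2)$, while the same intersection control forces each second-neighbor to lie in at most $(t-1)(6t-2)$ of the sets $V_u$, so $\sum_{u\in N_1(v)}|V_u|\le(t-1)(6t-2)n$. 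Combining the two estimates and using $|N_1(v)|\le 2d(v)$ produces, for every $v$, the local inequality $\sum_{u\in N_1(v)}d(u)\le 3(t-1)^2(3t-1)n+2d(v)f(t)$.

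To finish I would sum this local inequality over all $v\in V(\mathcal{H})$ and use the symmetry $u\in N_1(v)\Leftrightarrow v\in N_1(u)$ to recast the left-hand side as a degree sum weighted by neighborhood sizes; combined with the convexity estimate $\sum_u d(u)^2\ge d^2n$ for the average degree $d=3e(\mathcal{H})/n$, this gives the quadratic inequality $d^2n\le 3(t-1)^2(3t-1)n^2+2f(t)dn$. The decisive step is then to solve this for $d$: since $f(t)$ contributes only an $O(t)$ perturbation, taking square roots yields $d\le\sqrt{3(3t-1)}(t-1)\sqrt{n}+O(t)$, and hence $|B|=dn/3\le\frac{\sqrt{3(3t-1)}(t-1)}{3}n^{3/2}+O(n)$. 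Adding the bounds for $|A|$ and $|B|$ and folding all error terms into $o(n^{3/2})$ gives the claimed inequality. The two delicate points to verify are that every exceptional-edge correction (the $f(t)$ terms and the $O(t)$, $O(n)$ remainders) is genuinely of strictly lower order in $n$, and that the quadratic is solved so as to reproduce the stated constant $\frac{\sqrt{3(3t-1)}(t-1)}{3}$ rather than a weaker one.
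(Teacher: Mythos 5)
Your proposal is correct and follows essentially the same route as the paper: the identical $A$/$B$ decomposition, the K\H{o}v\'ari--S\'os--Tur\'an bound on $|A|$, the co-degree and neighborhood-intersection lemmas controlling $|V_u|$ from below and $\sum_{u\in N_1(v)}|V_u|$ from above, the local inequality $\sum_{u\in N_1(v)}d(u)\le 3(t-1)^2(3t-1)n+2f(t)d(v)$, and the resulting quadratic $d^2n\le 3(t-1)^2(3t-1)n^2+2f(t)dn$ solved for the average degree. You even state the solution correctly as $d\le\sqrt{3(3t-1)}(t-1)\sqrt{n}+O(t)$, where the paper's text has a harmless typo ($n$ in place of $\sqrt{n}$).
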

\begin{remark}
 \begin{enumerate}
   \item By combining the conclusions in \cite{MR4245297} directly, one can also derive upper bounds for $ex_3(n, Tr(K_{2,t}))$ when $t$ is small, but our result, i.e., Theorem \ref{theorem new result for K 2t} is better.
   \item It was proved in \cite{MR4245297} that
$$
\lim_{t\rightarrow\infty}\lim_{n\rightarrow\infty}\frac{ex_3(n,Tr(K_{2,t}))}{t^{3/2}n^{3/2}}=\frac{1}{6},
$$
so, Theorem \ref{theorem new result for K 2t} works well only when $t$ is small.
 \end{enumerate}

\end{remark}

\section{Concluding remarks}
In this note, we give a new lower bound for $ex_r(n,Tr(K_{1,t}))$ by combinatorial coverings, and obtain optimal constructions for some cases. We also derive an improved upper bound for $ex_3(n, Tr(K_{2,t}))$ when $t$ is small. However, when $t$ is fixed, we still don't know the limit (if it exists) $\lim_{n\rightarrow\infty}ex_3(n,Tr(K_{2,t}))/n^{3/2}.$

\bibliographystyle{abbrv}
\bibliography{REF}
\end{document}